\numberwithin{equation}{section}
\newtheorem{theorem}{Theorem}[section]
\newtheorem{lemma}[theorem]{Lemma}
\newtheorem{corollary}[theorem]{Corollary}
\theoremstyle{definition}
\newtheorem{definition}[theorem]{Definition} 
\newtheorem{remark}[theorem]{Remark}
\newtheorem{example}[theorem]{Example}
\begin{document}

\title{On the Waldschmidt Constant of
Square-free Principal Borel Ideals}
\thanks{Last updated: \today}

\author[E. Camps Moreno]{Eduardo Camps Moreno}
\address{Department of Mathematics, Escuela Superior de F\'isica y Matem\'aticas, CDMX, M\'exico}
\email{camps@esfm.ipn.mx}

\author[C. Kohne]{Craig Kohne}
\address{Department of Mathematics and Statistics\\
McMaster University, Hamilton, ON, L8S 4L8, Canada}
\email{kohnec@math.mcmaster.ca}

\author[E. Sarmiento]{Eliseo Sarmiento}
\address{Department of Mathematics, Escuela Superior de F\'isica y Matem\'aticas, CDMX, M\'exico}
\email{esarmiento@ipn.mx}

 \author[A. Van Tuyl]{Adam Van Tuyl}
\address{Department of Mathematics and Statistics\\
McMaster University, Hamilton, ON, L8S 4L8, Canada}
\email{vantuyl@math.mcmaster.ca}

\begin{abstract}
Fix a square-free monomial $m \in S = \mathbb{K}[x_1,\ldots,x_n]$.  The square-free principal 
Borel ideal generated by $m$, denoted ${\rm sfBorel}(m)$, is
the ideal generated by all the square-free monomials
that can be obtained via Borel moves from
the monomial $m$.  We give upper and lower bounds for the Waldschmidt
constant of ${\rm sfBorel}(m)$ in terms of the support
of $m$, and in some cases, exact values.  
For any rational $\frac{a}{b} \geq 1$,
we show
that there exists a square-free principal 
Borel ideal with Waldschmidt
constant equal to $\frac{a}{b}$. 
\end{abstract}

\subjclass[2000]{13F20, 13F55.}
\keywords{
monomial ideals, square-free principal Borel, Waldschmidt constant}
\maketitle


\section{Introduction}

Introduced in the 1970's by Waldschmidt \cite{W} to study
points in $\mathbb{C}^n$, the Waldschmidt constant pays 
a pivotal role in the study of the asymptotic properties
of homogeneous ideals.   More formally, given a homogeneous
ideal $I \subseteq S=\mathbb{K}[x_1,\ldots,x_n]$, the 
{\it Waldschmidt constant} of $I$, denoted 
$\widehat{\alpha}(I)$, is the limit
$\lim_{s\rightarrow \infty} \frac{\alpha(I^{(s)})}{s}$.  Here,
$\alpha(J)$ denotes the smallest degree of a generator of the
ideal $J$, and $I^{(s)}$ denotes the $s$-th symbolic power 
of $I$.  Current interest in this invariant 
was partially inspired by the work
of Bocci and Harbourne \cite{BH}.
Specifically, the Waldschmidt constant can be used
to gain insight into the ideal containment
problem, that is, comparing the regular powers of an ideal
to those of its symbolic powers.  The papers \cite{B+,BGHN,WalSQF,CGS} form a small sample of recent work on the Waldschmidt constant;  an introduction
to this topic can also be found in \cite{CHHVT}.

The goal of this paper is to investigate the Waldschmidt
constant of square-free principal Borel ideals.  
Given a monomial $m$, if $x_i|m$ and $j < i$, then
we call $x_j\cdot \frac{m}{x_i}$ a {\it Borel move} of $m$.
A monomial ideal is a {\it Borel ideal} (or a strongly stable ideal) 
if for every $m \in I$, all of the Borel moves of 
$m$ are also in $I$.   A monomial ideal
$I$ is a {\it principal Borel ideal} if there is a single
monomial $m$ such that every generator of $I$ is obtained
via a Borel move of $m$.  A {\it square-free principal  Borel
ideal} generated by $m$, denoted ${\rm sfBorel}(m)$, is the square-free monomial ideal generated by all the square-free monomials
that can be obtained via Borel moves on $m$.  The study of 
(principal)
Borel ideals has a rich history; we point the reader to 
\cite{fms,h} and the references therein for more on this topic.

Because ${\rm sfBorel}(m)$ is a square-free monomial ideal, the results
of Bocci, {\it et al.} \cite{WalSQF} are applicable.  
For any square-free monomial ideal $I$, it is was shown in \cite{WalSQF} how
to use the primary decomposition
of $I$ to create a  linear optimization problem whose
optimal solution is the Waldschmidt constant.  Solving
this optimization problem, however, may prove to be quite difficult. 

As an example,
suppose we wish to compute $\widehat{\alpha}({\rm sfBorel}(m))$ for
the monomial
\begin{equation}\label{piexample}
m = x_{33215}x_{33216}\cdots x_{104348} \in \mathbb{K}[x_1,\ldots,x_{104348}].
\end{equation}
To naively apply  \cite{WalSQF}
to find the Waldschmidt constant 
for this ideal would involve solving a linear optimization problem
in $104348$ variables and $\binom{104348}{33215} \approx 5.1 \times 10^{28347}$ inequalities\footnote[1]{As a point of comparison, it is estimated that there are $10^{82}$ atoms in the universe!}.  Given the size of this problem, 
it is natural to ask what information, if any, one can
obtain about the Waldschmidt constant for this
family of ideals.

The main results of this paper place bounds on
the Waldschsmidt constant for any square-free principal Borel ideal
${\rm sfBorel}(m)$.  Our bounds are expressed in terms of the 
support of
$m = x_{i_1}\cdots x_{i_s}$, that is, $\{i_1,\ldots,i_s\}$.  Our proofs rely on Francisco, Mermin, and Schweig's \cite{fms} description of
the associated primes of ${\rm sfBorel}(m)$.  These associated
primes then allow us
to describe many of the inequalities in the linear
optimization problem given in \cite{WalSQF}, which are then used
to bound the Waldschmidt constant.   In some
cases, we are able to give exact values for
$\widehat{\alpha}({\rm sfBorel}(m))$.

The work in this paper generalizes some of our 
work in \cite{cksvt}.  Previously we showed that
the Waldschmidt constant of a principal Borel ideal generated
by $m$ is $\deg m$.  Unlike the case of principal
Borel ideals, the Waldschmidt constant of a square-free
principal Borel ideal need
not be an integer; in fact:

\begin{theorem}[Corollary \ref{rationalcor}]
Let $\frac{a}{b} \geq 1$ be a rational number.  Then
there exists a square-free principal Borel ideal $I$
such that $\widehat{\alpha}(I) = \frac{a}{b}$. 
\end{theorem}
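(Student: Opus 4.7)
\emph{Proof plan.} The strategy is to realize every rational $\tfrac{a}{b} \geq 1$ as the Waldschmidt constant of a squarefree Veronese ideal, which is itself a square-free principal Borel ideal.

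Given positive integers $a \geq b \geq 1$, set $n = a$, $d = a-b+1$, and let
$$m \;=\; x_{n-d+1}x_{n-d+2}\cdots x_n \;=\; x_b x_{b+1}\cdots x_a \;\in\; \mathbb{K}[x_1,\ldots,x_a].$$
My first step would be to verify that ${\rm sfBorel}(m)$ is exactly the squarefree Veronese ideal $I_{n,d}$ generated by all squarefree monomials of degree $d$ in $\{x_1,\ldots,x_n\}$. For any squarefree monomial $x_{i_1}\cdots x_{i_d}$ with $i_1 < i_2 < \cdots < i_d \leq n$, distinctness of the indices forces $i_j \leq n-d+j$, so one passes from $m$ to $x_{i_1}\cdots x_{i_d}$ by successively replacing $x_{n-d+j}$ with $x_{i_j}$, each step a legitimate Borel move. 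Conversely, Borel moves preserve both degree and squarefreeness, so every generator of ${\rm sfBorel}(m)$ is a squarefree degree-$d$ monomial.

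The second step is to compute $\widehat{\alpha}(I_{n,d})$. The associated primes of $I_{n,d}$ are precisely the ideals $(x_i : i \in T)$ as $T$ ranges over $(n-d+1)$-subsets of $\{1,\ldots,n\}$, so the \cite{WalSQF} linear program computing $\widehat{\alpha}$ becomes: minimize $\sum_{i=1}^n y_i$ subject to $\sum_{i\in T} y_i \geq 1$ for every such $T$ and $y_i \geq 0$. The uniform point $y_i = \tfrac{1}{n-d+1}$ is feasible, with objective value $\tfrac{n}{n-d+1}$. Averaging any feasible solution over the natural $\mathfrak{S}_n$-action on the coordinates yields a constant feasible solution of the same objective value, so the uniform point is optimal and $\widehat{\alpha}(I_{n,d}) = \tfrac{n}{n-d+1}$.

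Substituting $n=a$ and $d = a-b+1$ gives $n-d+1 = b$, hence $\widehat{\alpha}({\rm sfBorel}(m)) = \tfrac{a}{b}$. The degenerate case $a=b$ collapses to $d=1$, $m = x_a$, and the ideal $(x_1,\ldots,x_a)$, whose Waldschmidt constant is trivially $1$. The only delicate ingredient is the LP optimality step, but the full $\mathfrak{S}_n$-symmetry of the constraint system makes this essentially a one-line symmetrization argument, so I do not expect a serious obstacle.
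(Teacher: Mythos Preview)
Your proof is correct and uses the same monomial $m = x_b x_{b+1}\cdots x_a$ that the paper uses. The difference lies in how the Waldschmidt constant of ${\rm sfBorel}(m)$ is computed. The paper deduces $\widehat{\alpha}({\rm sfBorel}(x_i x_{i+1}\cdots x_{i+l})) = \tfrac{i+l}{i}$ as a special case of its Theorem~\ref{aw}, a general exact-value result whose proof constructs an explicit feasible point for the \emph{dual} LP using binomial-coefficient weights on selected associated primes. You instead recognize ${\rm sfBorel}(m)$ as the squarefree Veronese $I_{n,d}$ and exploit the full $\mathfrak{S}_n$-symmetry of the primal LP to pin down the optimum by a one-line averaging argument. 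Your route is more elementary and self-contained for this particular corollary (indeed, the paper itself remarks that the Veronese case was already known from \cite{WalSQF}); the paper's route, by contrast, situates the computation inside a broader framework that also handles monomials $m$ lacking any obvious symmetry.
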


\noindent
The above result is effective in the sense that  we can explicitly
construct the required monomial  $m$ so 
$\widehat{\alpha}({\rm sfBorel}(m)) = \frac{a}{b}$.   In fact,
the monomial $m$ of \eqref{piexample}  was chosen so that
$$\widehat{\alpha}({\rm sfBorel}(m)) = \frac{104348}{33215}.$$
The astute reader might recognize that this number is a convergent of the continued fraction expansion of $\pi$.  In fact, $\frac{104348}{33215} =3.14159265392$ agrees with $\pi$ up to $9$ digits.

We outline our paper.  Section 2 provides
the  background on square-free principal Borel
ideals and the Waldschmidt constant. In Section 3, upper bounds on the Waldschmidt constant are obtained.
In Section 4, under certain hypotheses, exact values 
of $\widehat{\alpha}({\rm sfBorel}(m))$ are obtained,
as well as a recursive approach to
finding a lower bound
(Theorem \ref{lowerboundresult}).

\noindent
{\bf Acknowledgments.} 
Camps is supported by Conacyt. Sarmiento's research is supported by SNI-Conacyt. Camps and Sarmiento are supported by PIFI IPN 20201016.
 Van Tuyl’s research is supported by NSERC Discovery Grant 2019-05412. 
 

\section{Background}

Throughout this paper
$S = \mathbb{K}[x_1,\ldots,x_n]$, where $\mathbb{K}$ is 
field of characteristic zero.  
In this section we recall the relevant background on square-free monomial ideals,
square-free Borel ideals, and the Waldschmdit constant.   

\subsection{Square-free Borel ideals}  For unexplained 
terminology about (square-free) monomial ideals, we refer the
reader to \cite{HH}.   We define
the main objects of study in this paper.

\begin{definition}
Let $X = \{m_1,\ldots,m_t\}$ be a set
of square-free monomials in $S$.  
The {\it square-free Borel ideal generated by $X$},
denoted ${\rm sfBorel}(X)$, is the square-free
monomial ideal generated by the square-free monomials
that can be obtained via Borel moves from any
monomial $m \in X$.  If $X = \{m\}$, then
we abuse notation and write ${\rm sfBorel}(m)$
for ${\rm sfBorel}(\{m\})$;  
furthermore, we call ${\rm sfBorel}(m)$ a
{\it square-free principal Borel ideal}.
\end{definition}

The {\it support} of a square-free monomial 
$m = x_{i_1}\cdots x_{i_s}$ is the set ${\rm supp}(m) = \{i_1,\ldots,i_s\}$.  For 
our future arguments, we need two tuples that can be
constructed from ${\rm supp}(m)$.

\begin{definition}\rm
    Let $m=x_{i_1}\cdots x_{i_s}$ be a square-free monomial. 
    Let 
    $$T(m)= (t_0,t_1,\ldots,t_k)$$ 
    where $t_0=s$ and $t_i=\max\{j<t_{i-1}\ |\ i_j<i_{j+1}-1\}$. 
    Furthermore, let 
    $$IT(m) = (i_{t_0},i_{t_1},\ldots,i_{t_k}).$$
\end{definition}

 \begin{remark}\label{redundantgens2}
 The following observations will hopefully help the reader with our notation.
  The $t_i$'s are 
    recording where the indices of
    $x_{i_1}x_{i_2}\cdots x_{i_s}$ are ``jumping'' by more than one.  For example, if
    $m =  x_2x_3x_5x_6x_8x_{10}$, then
    $$T(m) = (t_0,t_1,t_2,t_3) = (6,5,4,2)$$
    records the positions where the indices
    increase by more than one.  Note that we
    are recording this information from right-to-left.  Equivalently,
    we can define the $t_i$'s as follows.
    Consider the tuple $(i_1-1,i_2-2,\ldots,i_s-s)$.  The $t_i$'s
    are then the locations where 
    $i_j-j < i_{j+1}-(j+1)$, again reading
    right-to-left.  In our example,
    $(2-1,3-2,5-3,6-4,8-5,10-6) = 
    (1,1,2,2,3,4)$, so $t_0 =6$,
    and 
    $t_1=5$, $t_2=4$ and $t_3=2$ since
    these are the indices where $i_j-j < i_{j+1}-(j+1)$.
    Continuing with this example, the tuple
    $$IT(m) = (i_6,i_5,i_4,i_2) = (10,8,6,3)$$
    records the indices of the variables where the ``jump" occurs.
 \end{remark}
 
 The following lemma records some facts that follow
 immediately from the definitions.
 
 \begin{lemma}\label{lem.tm}
 Let $m$ be a square-free monomial with $T(m) = (t_0,t_1,\ldots,
 t_k)$ and $IT(m) = (i_{t_0},i_{t_1},\ldots,i_{t_k})$. Then
 \begin{enumerate}
     \item $s= t_0 > t_1 > \cdots > t_k \geq1$,
     \item $i_{t_0} > i_{t_1} > \cdots > i_{t_k}$, and
     \item $i_{t_j}-t_j > i_{t_{j+1}}-t_{j+1} $ for 
     $j=0,\ldots,k-1$.
 \end{enumerate}
 \end{lemma}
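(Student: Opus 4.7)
The plan is to reduce everything to properties of the auxiliary sequence $d_j := i_j - j$ for $j = 1, \ldots, s$, which is already implicit in Remark \ref{redundantgens2}. First I would observe that since $m$ is square-free and we write $m = x_{i_1}\cdots x_{i_s}$ with $i_1 < i_2 < \cdots < i_s$, we have $d_{j+1} - d_j = (i_{j+1} - i_j) - 1 \geq 0$, so $(d_1, d_2, \ldots, d_s)$ is weakly increasing. Moreover, $d_{j+1} > d_j$ holds if and only if $i_{j+1} - i_j > 1$, i.e., if and only if $i_j < i_{j+1} - 1$. Thus the defining condition for $t_i$ can be rephrased as $t_i = \max\{j < t_{i-1} \mid d_j < d_{j+1}\}$.

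For (1), the inequality $t_i < t_{i-1}$ is immediate from the definition (the maximum is taken over $j < t_{i-1}$), and the recursion terminates once the set becomes empty; the last $t_k$ is therefore a valid index, so $t_k \geq 1$. Part (2) then follows at once from (1) together with the fact that the sequence $i_1 < i_2 < \cdots < i_s$ is strictly increasing: $t_k < t_{k-1} < \cdots < t_0$ forces $i_{t_k} < i_{t_{k-1}} < \cdots < i_{t_0}$.

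The only part that requires any real argument is (3), and my approach is to use the reformulation above. By maximality of $t_{j+1}$ in $\{\ell < t_j \mid d_\ell < d_{\ell+1}\}$, for every index $\ell$ with $t_{j+1} < \ell < t_j$ we have $d_\ell = d_{\ell+1}$; telescoping this yields
\[
d_{t_j} = d_{t_{j+1}+1}.
\]
But at $\ell = t_{j+1}$ we have the strict inequality $d_{t_{j+1}} < d_{t_{j+1}+1}$, so
\[
i_{t_j} - t_j = d_{t_j} = d_{t_{j+1}+1} > d_{t_{j+1}} = i_{t_{j+1}} - t_{j+1},
\]
giving (3). The main (and only) subtlety is keeping straight that $t_{j+1} < t_j$ corresponds, after the index shift, to comparing $d_{t_{j+1}+1}$ with $d_{t_{j+1}}$ rather than with $d_{t_{j+1}}$ directly; once this bookkeeping is handled, no further case analysis or induction is needed.
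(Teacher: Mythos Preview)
Your proof is correct and follows essentially the same approach as the paper: both arguments introduce the weakly increasing sequence $d_j = i_j - j$ and deduce (3) from the chain $d_{t_{j+1}} < d_{t_{j+1}+1} \leq d_{t_j}$, using that the $t_i$'s are exactly the positions of strict increase. Your version is slightly more detailed (you note that in fact $d_{t_{j+1}+1} = d_{t_j}$), but this is the same idea.
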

 \begin{proof}
 (1) and (2) are immediate. For (3) we have
 \begin{equation}\label{jumps}
 i_1-1 \leq i_2 -2 \leq \cdots \leq i_j-j \leq \cdots
 \leq i_s-s
 \end{equation}
 for all $j=1,\ldots,s$.  So
 $$i_{t_{j+1}} - t_{j+1} < i_{t_{j+1}+1} - (t_{j+1}+1) 
 \leq i_{t_j}-t_j
 $$
 since the $t_i$'s are precisely the locations where
 the inequalities in \eqref{jumps} are strict.
 \end{proof}

Given an ideal $I$, we let ${\rm ass}(I)$ denote the {\it set of
associated primes} of the ideal $I$.  When $I$ is a square-free
monomial ideal, it is known that all the associated primes
are prime monomial ideals.
The next result is critical
for our arguments, since the associated primes
of ${\rm sfBorel}(I)$ will be related to
an optimization problem to compute the Waldschmidt constant.
Note that the original statement 
of Theorem \ref{theo.dual1} involved the language of
Alexander duals.  We have given an equivalent expression
for this statement.

\begin{theorem} \cite[Theorem 3.17]{fms} \label{theo.dual1}
Let $m=x_{i_1}\cdots x_{i_s}$ be a square-free monomial 
with $T(m) = (t_0,t_1,\ldots,
 t_k)$ and $IT(m) = (i_{t_0},i_{t_1},\ldots,i_{t_k})$, 
 and suppose $I = {\rm sfBorel}(m)$.
    Then 
    \[\langle x_{j_1},\ldots,x_{j_l} \rangle \in {\rm ass}(I) \]
    if and only if $x_{j_1}x_{j_2}\cdots x_{j_l}$ is a
    minimal generator of the square-free Borel ideal
    $${\rm sfBorel}(\{x_{t_k}x_{t_k+1} \cdots x_{i_{t_k}},~
    x_{t_{k-1}}x_{t_{k-1}+1} \cdots x_{i_{t_{k-1}}}, ~\ldots,~
    x_{t_0}x_{t_0+1}\cdots x_{i_{t_0}}\}).$$
\end{theorem}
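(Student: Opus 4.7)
The plan is to reformulate the theorem via Alexander duality. Since $I = {\rm sfBorel}(m)$ is a square-free monomial ideal it is radical, so its associated primes are exactly its minimal primes; and under the standard Alexander duality correspondence for radical monomial ideals, $\langle x_{j_1}, \ldots, x_{j_l}\rangle$ is a minimal prime of $I$ if and only if $x_{j_1}\cdots x_{j_l}$ is a minimal generator of the Alexander dual $I^\vee$. Hence it suffices to establish the ideal equality
\[
I^\vee \;=\; {\rm sfBorel}\bigl(\{u_0, u_1, \ldots, u_k\}\bigr), \qquad \text{where } u_j := x_{t_j} x_{t_j + 1} \cdots x_{i_{t_j}}.
\]

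The first tool is a Hall-type membership criterion. The minimal generators of ${\rm sfBorel}(m)$ are precisely the square-free monomials $x_{k_1}\cdots x_{k_s}$ with $k_1 < \cdots < k_s$ and $k_r \leq i_r$ for all $r$. By Hall's matching theorem, $\prod_{j \in J} x_j$ lies in $I^\vee$ exactly when $J$ intersects every such support, which is equivalent to the numerical condition $|J \cap [1, i_r]| \geq i_r - r + 1$ for some $r \in \{1, \ldots, s\}$.

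The inclusion ${\rm sfBorel}(\{u_0, \ldots, u_k\}) \subseteq I^\vee$ is the easy direction: any square-free Borel move of $u_j$ has support $J$ of size $i_{t_j} - t_j + 1$ contained in $[1, i_{t_j}]$, so the criterion is met at $r = t_j$.

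For the reverse inclusion, I would show that every minimal generator $u$ of $I^\vee$ is a square-free Borel move of some $u_j$. Let $J = {\rm supp}(u)$ and set $r^* := \min\{r : |J \cap [1, i_r]| \geq i_r - r + 1\}$. A minimality argument --- removing any element of $J$ breaks the criterion for every $r$ --- forces $J \subseteq [1, i_{r^*}]$ and $|J| = i_{r^*} - r^* + 1$. Lemma \ref{lem.tm}(3) asserts that the values $a_{t_0} > a_{t_1} > \cdots > a_{t_k}$ (with $a_r := i_r - r$) are strictly decreasing, and these are exactly the values $a$ attains on the maximal intervals of constancy cut out by the jumps in $T(m)$. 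Therefore there is a unique $q$ with $a_{r^*} = a_{t_q}$, and the block containing $r^*$ lies to the left of $t_q$, giving $r^* \leq t_q$. Writing $J = \{j_1 < \cdots < j_L\}$ with $L = |J| = a_{t_q}+1$, I would then verify the shift inequality $j_p \leq t_q + p - 1$ for every $p$, which is the precise condition for $u$ to be a square-free Borel move of $u_q$. The main obstacle lies in this last verification: one must combine $|J| = a_{t_q} + 1$, $J \subseteq [1, i_{r^*}]$, and the block structure of $T(m)$ to push through the inequality, where the decisive collapse is the transparent estimate $i_{r^*} \leq i_{t_q}$ forced by $r^* \leq t_q$.
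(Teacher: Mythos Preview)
The paper does not give its own proof of this theorem: it is quoted from \cite[Theorem~3.17]{fms}, with the remark that the original statement there was phrased in terms of Alexander duality. So there is no proof in the paper to compare your proposal against; you have supplied an argument where the authors simply cite one.

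That said, your approach is the natural one and is essentially correct. The Alexander-duality reduction and the Hall-type membership criterion
\[
x_J \in I^\vee \iff \exists\, r \in \{1,\ldots,s\}:\ |J \cap [1,i_r]| \geq i_r - r + 1
\]
are both valid, and your minimality deductions $J \subseteq [1,i_{r^*}]$ and $|J| = i_{r^*} - r^* + 1$ go through exactly as you sketch. One small comment on the final step: the ``decisive collapse'' you name, $i_{r^*} \leq i_{t_q}$, only handles the top index $p = L$. What actually closes the shift inequality for every $p$ is the pigeonhole bound coming from $J \subseteq [1,i_{r^*}]$ with $|J| = i_{r^*} - r^* + 1$: writing $J = \{j_1 < \cdots < j_L\}$ one gets
\[
j_p \leq i_{r^*} - (L - p) = r^* - 1 + p \leq t_q + p - 1,
\]
using $r^* \leq t_q$ in the last step. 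With that line made explicit, your argument is complete and recovers the cited result.
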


\noindent
The monomials
$x_{t_j}\cdots x_{i_{t_j}}$ for $j=0,\ldots,k$ are minimal in
the sense that if we remove any of them, we  change the 
generators of the resulting square-free Borel ideal.

\begin{example}
   In Remark \ref{redundantgens2} it was shown that the monomial
   $m=x_2x_3x_5x_6x_8x_{10}$ has $T(m) = (6,5,4,2)$ and
   $IT(m) = (10,8,6,3)$.  So the associated primes of 
   ${\rm sfBorel}(m)$ are in one-to-one correspondence with
   the minimal generators of 
   $${\rm sfBorel}(\{x_2x_3, x_4x_5x_6, x_5x_6x_7x_8, x_6x_7x_8x_9x_{10}\}).$$
   \end{example}

\subsection{The Waldschmidt constant}  We recall the definition
of the Waldschmidt constant and a procedure to compute this invariant
for square-free monomial ideals.

Given a square-free monomial ideal $I \subseteq S$,  let 
$I = P_1 \cap \cdots \cap P_t$ denote its minimal primary decomposition.
The {\it $s$-th symbolic power} of the square-free monomial ideal 
$I$, denoted $I^{(s)}$,  is the ideal
$$I^{(s)} = P_1^s \cap P_2^s \cap \cdots \cap P_t^s.$$
Note that there is a more general definition of a symbolic power
of an ideal (see \cite{CHHVT,DDGHN}); our definition is 
equivalent when restricted to square-free monomial ideals.  

For any homogeneous ideal $J \subseteq S$, let $\alpha(J)$
denote the smallest degree of a generator of $J$.  The
{\it Waldschmidt constant} of a square-free monomial
ideal $I$, denoted $\widehat{\alpha}(I)$, is then
$$\widehat{\alpha}(I) = \lim_{s \rightarrow \infty} \frac{\alpha(I^{(s)})}{s}.$$
Our key tool will be the following result which relates
the Waldschmidt constant of a square-free monomial ideal
to a linear program.

\begin{theorem}\cite[Theorem 3.2]{WalSQF}\label{linprogthm1}
Let $I \subseteq S = \mathbb{K}[x_1,\ldots,x_n]$ be a square-free monomial ideal with minimal primary decomposition $I = P_{1} \cap P_{2} \cap \dots \cap P_{t}$. Define the $t \times n$ matrix $A$ where 
\[ A_{i,j} = \begin{cases} 1 &\mbox{if } x_{j} \in P_{i}\\
0 & \mbox{if } x_{j} \notin P_{i}. \end{cases}\]
Then $\widehat\alpha(I)$ is the optimum value of the linear program
\[\min\{\mathbf{1}^{T}\mathbf{y}\ |\ A\mathbf{y}\geq\mathbf{1}, \mathbf{y}\geq\mathbf{0}\}.\]
\end{theorem}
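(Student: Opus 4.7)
The plan is to reformulate the computation of $\alpha(I^{(s)})$ as an integer linear program whose continuous relaxation is exactly a scaled copy of the LP in the statement. First, since each associated prime $P_i$ is generated by the variables $\{x_j : A_{i,j}=1\}$, a monomial $x_1^{b_1}\cdots x_n^{b_n}$ lies in $P_i^s$ if and only if $\sum_{j}A_{i,j}b_j \ge s$, i.e.\ $(A\mathbf{b})_i\ge s$. Intersecting over $i$, the monomial lies in $I^{(s)}=\bigcap_i P_i^s$ iff $A\mathbf{b}\ge s\mathbf{1}$. Because $I^{(s)}$ is a monomial ideal, its minimum-degree generator is a monomial, so
\[
\alpha(I^{(s)}) \;=\; \min\{\mathbf{1}^{T}\mathbf{b} \ |\ A\mathbf{b}\ge s\mathbf{1},\ \mathbf{b}\in\mathbb{Z}_{\ge 0}^{n}\}.
\]

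Let $L$ denote the optimal value of the LP in the statement; scaling the right-hand side by $s$ shows the LP $\min\{\mathbf{1}^T\mathbf{y} \ |\ A\mathbf{y}\ge s\mathbf{1},\ \mathbf{y}\ge\mathbf{0}\}$ has optimum $sL$. For the lower bound, every ILP-feasible $\mathbf{b}$ is LP-feasible, so $\alpha(I^{(s)})\ge sL$, and dividing by $s$ yields $\widehat\alpha(I)\ge L$. For the upper bound, since $A\in\{0,1\}^{t\times n}$, an optimal vertex $\mathbf{y}^*$ of the LP has rational coordinates. Choose a positive integer $s_0$ clearing every denominator of $\mathbf{y}^*$; then $\mathbf{b}_0 := s_0\mathbf{y}^*\in\mathbb{Z}_{\ge 0}^n$ satisfies $A\mathbf{b}_0\ge s_0\mathbf{1}$ and $\mathbf{1}^T\mathbf{b}_0 = s_0 L$, so $\alpha(I^{(s_0)})\le s_0 L$. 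Iterating along multiples, $\alpha(I^{(ks_0)})\le ks_0L$ for every $k\in\mathbb{N}$, giving $\alpha(I^{(ks_0)})/(ks_0)\le L$.

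To convert this subsequential inequality into $\widehat\alpha(I)\le L$ (and to know the defining limit exists at all), I would invoke Fekete's lemma. The required subadditivity $\alpha(I^{(s+t)})\le \alpha(I^{(s)})+\alpha(I^{(t)})$ follows from the containment $I^{(s)}I^{(t)}\subseteq I^{(s+t)}$, which in turn holds component-wise: for each $k$, $(\bigcap_i P_i^s)(\bigcap_i P_i^t)\subseteq P_k^s\cdot P_k^t\subseteq P_k^{s+t}$. Fekete then guarantees that $\lim_{s\to\infty}\alpha(I^{(s)})/s$ equals $\inf_s \alpha(I^{(s)})/s$, so the bound along the subsequence $ks_0$ forces $\widehat\alpha(I)\le L$. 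The main obstacle is the round-trip between fractional LP solutions and integer exponent vectors; what makes it work is the combination of rationality of the LP optimum (so a suitable scaling produces an honest monomial in $I^{(s_0)}$) with subadditivity (so a subsequential bound is enough to control the full limit).
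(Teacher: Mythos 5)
Your proof is correct. Note that the paper does not supply its own argument for this statement; it is cited as Theorem 3.2 of Bocci et al.\ \cite{WalSQF}, so there is no in-paper proof to compare against. Your route is the standard one for this result and is essentially the argument given in the cited reference: identify $\alpha(I^{(s)})$ with the integer program $\min\{\mathbf{1}^T\mathbf{b} : A\mathbf{b}\ge s\mathbf{1},\ \mathbf{b}\in\mathbb{Z}_{\ge 0}^n\}$ via the characterization of membership in $P_i^s$ for a monomial prime, then pass to the LP relaxation, using rationality of the LP optimum to produce an honest monomial witness at a single scale $s_0$, and using subadditivity of $s\mapsto\alpha(I^{(s)})$ (via $I^{(s)}I^{(t)}\subseteq I^{(s+t)}$ and Fekete's lemma) to promote the subsequential bound to the full limit. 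All the individual steps check out: the membership criterion $(A\mathbf{b})_i\ge s$ is exactly right for the monomial prime $P_i$, the LP is feasible and bounded below (each row of $A$ is nonzero since each $P_i\ne 0$, and $\mathbf{y}\ge\mathbf{0}$ bounds the objective), so it attains its optimum at a rational vertex, and the product containment and $\alpha$-additivity on products of homogeneous ideals give the subadditivity you need.
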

\noindent
The matrix $A$ in the above theorem
will be called the \emph{matrix of associated primes of $I$}. 

\begin{example}
    Let $m$ be the monomial of \eqref{piexample} from the introduction,
    and so $T(m) = (71134)$ and $IT(m) = (104348)$.  The associated primes of ${\rm sfBorel}(m)$ are in one-to-one
    correspondence with the generators of
    ${\rm sfBorel}(x_{71134}x_{71135}\cdots x_{104348})$.  
    But this is the ideal generated by
    all the square-free monomials of degree
    $33215$, of which there are $\binom{104348}{33215}$.  So
    the matrix of associated primes of ${\rm sfBorel}(m)$
    will be a $\binom{104348}{33215} \times 104348$ matrix.
\end{example}


\section{Upper bounds}

In this section we give an upper bound on the Waldschmidt
constant of a square-free principal Borel ideal.  Our strategy
is to show that there is enough structure
in the optimization problem of Theorem \ref{linprogthm1}
that we can bound the Waldschmidt constant.

We begin with a lemma which allows us to reduce to a smaller
polynomial ring.

\begin{lemma}\label{reducevar}
Let $m=x_{i_1}\cdots x_{i_s}$ be a square-free monomial in
$S = \mathbb{K}[x_1,\ldots,x_n]$ with $I = {\rm sfBorel}(m) \subseteq S$.  Consider
the same monomial $m$, but in the ring $R =\mathbb{K}[x_1,\ldots,x_{i_s}]$, and
let $J = {\rm sfBorel}(m) \subseteq R$.  Then
$\widehat{\alpha}(I) = \widehat{\alpha}(J).$
\end{lemma}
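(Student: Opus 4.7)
The plan is to observe that passing from $S$ to $R$ only eliminates variables that play no role whatsoever in either the generators of the ideal or its associated primes, so the underlying linear program from Theorem \ref{linprogthm1} is essentially unchanged.

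First I would verify that the generator set of ${\rm sfBorel}(m)$ is identical in $S$ and in $R$. A Borel move replaces $x_i$ by $x_j$ with $j < i$, so starting from $m = x_{i_1}\cdots x_{i_s}$, every square-free monomial reachable by successive Borel moves involves only variables with index at most $i_s$. Thus $I$ and $J$ have the same minimal monomial generators, viewed inside $R \subseteq S$.

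Next I would locate the associated primes. By Theorem \ref{theo.dual1}, every prime in ${\rm ass}(I)$ corresponds to a minimal generator of a certain square-free Borel ideal whose generators $x_{t_j}x_{t_j+1}\cdots x_{i_{t_j}}$ involve only variables with index at most $i_{t_0} = i_s$. Hence every associated prime of $I$ is generated by a subset of $\{x_1,\ldots,x_{i_s}\}$, and the associated primes of $I$ in $S$ coincide exactly with those of $J$ in $R$ (after the canonical identification of variables). Consequently, the matrix of associated primes of $I$, call it $A$, has zero columns in positions $i_s+1, \ldots, n$, while the matrix $A'$ of associated primes of $J$ is obtained by deleting these zero columns.

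Finally I would compare the two linear programs from Theorem \ref{linprogthm1}. The program for $I$ is
\[\min\{y_1+\cdots+y_n \mid A\mathbf{y}\geq \mathbf{1},\ \mathbf{y}\geq \mathbf{0}\},\]
and the program for $J$ is
\[\min\{y_1+\cdots+y_{i_s} \mid A'\mathbf{y}'\geq \mathbf{1},\ \mathbf{y}'\geq \mathbf{0}\}.\]
Since the variables $y_{i_s+1},\ldots,y_n$ appear in the objective with positive coefficient but in no constraint, any optimal solution must set them to zero, and what remains is exactly the program for $J$. Therefore the two optima agree, giving $\widehat{\alpha}(I) = \widehat{\alpha}(J)$. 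There is no real obstacle here; the only subtle point to check is that Theorem \ref{theo.dual1} genuinely confines all associated primes to variables indexed by at most $i_s$, which follows directly from its statement.
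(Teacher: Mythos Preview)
Your proposal is correct and follows essentially the same approach as the paper: invoke Theorem~\ref{theo.dual1} to see that the associated primes of $I$ and $J$ coincide, observe that the matrix of associated primes differs only by zero columns indexed by $x_{i_s+1},\ldots,x_n$, and conclude via Theorem~\ref{linprogthm1}. Your write-up is slightly more detailed than the paper's (you spell out why the extra variables must be zero at the optimum), but the argument is the same.
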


\begin{proof}
By Theorem \ref{theo.dual1}, the associated primes of $I$ and $J$ are 
the same (although viewed in different rings).  So the matrix of
associated primes of $J$ in Theorem \ref{linprogthm1} is the same
as the matrix of the associated primes of $I$, except that the
columns in matrix of associated primes of $I$ indexed by the variables $x_{i_s+1},\ldots,x_n$ all contain
zeroes.  The result now follows from Theorem \ref{linprogthm1}.
\end{proof}

Before proceeding, we introduce additional notation.
Given a square-free monomial $m$ with
$T(m) = (t_0,\ldots,t_k)$ and $IT(m) = (i_{t_0},
\ldots,i_{t_k})$,
we have the  inequalities
$$t_k < t_{k-1} < \cdots < t_0 = s~~\mbox{and}~~
i_{t_k} < i_{t_{k-1}} < \cdots < i_{t_0}.$$  
by Lemma \ref{lem.tm}.
Let $\ell$ be smallest integer such that 
$$i_{t_{\ell+1}} < t_0 \leq i_{t_\ell}.$$
In particular, $\ell$ identifies where in the sequence of $i_{t_j}$'s
we would place $t_0=s$.  

Let $A$ be the matrix of associated primes of 
$I = {\rm sfBorel}(m)$.  We will let 
$A_P$ denote the row associated to the associated prime $P$.  The row $A_P$ corresponds to
a minimal generator $m$ of the ideal in Theorem \ref{theo.dual1}.  

As the next lemma shows, we can bound
the optimal solution of Theorem \ref{linprogthm1}
by considering only a submatrix of
the matrix of associated primes.

\begin{lemma}\label{submatrix}
Let $m=x_{i_1}\cdots x_{i_s}$ be a square-free monomial 
with $T(m) = (t_0,t_1,\ldots,
 t_k)$, $IT(m) = (i_{t_0},i_{t_1},\ldots,i_{t_k})$, and $\ell$ as 
 defined above. Let $I = {\rm sfBorel}(m)$, and let $A$ denote
 its matrix of associated primes.
Let $B$ be the submatrix of $A$ where
the $j$-th row of $B$ corresponds to
the associated prime 
$\langle x_{t_j},\ldots, x_{i_{t_j}}\rangle$
for $j=0,\ldots,k$.
Suppose $\mathbf{x} \in \mathbb{R}^n$ is such that
\begin{enumerate}
\item $B\mathbf{x}\geq\mathbf{1}$,
\item $\mathbf{x}_j\geq\mathbf{x}_{j+1}$ for $1\leq j\leq i_{t_\ell}$, and
\item $\mathbf{x}_{i_{t_\ell}}\geq\mathbf{x}_j$, for $i_{t_\ell} \leq j \leq n$.
\end{enumerate}
Then $A\mathbf{x}\geq\mathbf{1}$.
\end{lemma}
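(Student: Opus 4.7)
The plan is to verify $A\mathbf{x}\ge\mathbf{1}$ one row at a time. Fix an associated prime $P=\langle x_{j_1},\ldots,x_{j_l}\rangle$ of $I$ with indices sorted $j_1<\cdots<j_l$. Theorem~\ref{theo.dual1} identifies $x_{j_1}\cdots x_{j_l}$ as a minimal generator of $\mathrm{sfBorel}(\{x_{t_j}x_{t_j+1}\cdots x_{i_{t_j}}\mid j=0,\ldots,k\})$, so it is a Borel move of one such $x_{t_j}\cdots x_{i_{t_j}}$; fix this $j$, noting that $l=i_{t_j}-t_j+1$ and $j_r\le t_j+r-1$ for every $r$. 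Row $j$ of $B$ reads $\sum_{i=t_j}^{i_{t_j}}\mathbf{x}_i\ge 1$, so it will suffice to exhibit a bijection $\sigma\colon\{1,\ldots,l\}\to\{t_j,t_j+1,\ldots,i_{t_j}\}$ satisfying $\mathbf{x}_{j_r}\ge\mathbf{x}_{\sigma(r)}$ for every $r$: summing over $r$ then gives the row of $A\mathbf{x}$ indexed by $P$.

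A preliminary I would use repeatedly is that chaining hypotheses (2) and (3) yields $\mathbf{x}_a\ge\mathbf{x}_b$ whenever $a\le b$ and $a\le i_{t_\ell}$; equivalently, $i_{t_\ell}$ is the threshold beyond which the monotonicity available to us breaks down. To build $\sigma$, let $b:=|\{r:j_r\le i_{t_\ell}\}|$. For $r>b$, the Borel bound gives $i_{t_\ell}<j_r\le i_{t_j}$, so $j_r\in\{t_j,\ldots,i_{t_j}\}$ and I set $\sigma(r)=j_r$, which is trivially valid. For $r\le b$, put $R:=\{t_j,\ldots,i_{t_j}\}\setminus\{j_{b+1},\ldots,j_l\}$, enumerate it increasingly as $r_1<\cdots<r_b$, and set $\sigma(r)=r_r$. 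Because the removed elements all exceed $i_{t_\ell}$, the initial block $\{t_j,\ldots,i_{t_\ell}\}$ is contained in $R$; writing $p:=i_{t_\ell}-t_j+1$, this means $r_r=t_j+r-1$ for $r\le\min(b,p)$ and $r_r>i_{t_\ell}$ for $\min(b,p)<r\le b$. In the first regime $j_r\le t_j+r-1=r_r\le i_{t_\ell}$ and (2) gives the inequality; in the second $j_r\le i_{t_\ell}<r_r$ and the chained (2)+(3) gives it.

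The main obstacle I anticipate is handling the \emph{high} positions $r>b$, because hypotheses (2) and (3) compare nothing between two values strictly exceeding $i_{t_\ell}$. This is resolved by the observation that the Borel bound $j_r\le t_j+r-1\le i_{t_j}$ already places each such $j_r$ inside the target set $\{t_j,\ldots,i_{t_j}\}$, so self-pairing is valid; removing just these self-paired indices still leaves the entire block $\{t_j,\ldots,i_{t_\ell}\}$ available in $R$, and the Borel constraint $j_r\le t_j+r-1$ then exactly guarantees that the greedy matching of the remaining low positions succeeds.
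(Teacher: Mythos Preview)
Your argument is correct and follows the same overall strategy as the paper's: for each associated prime $P$ you compare the row $A_P$ to the corresponding row $B_j$ via a position-wise bijection and use hypotheses (2) and (3) to transfer $B_j\mathbf{x}\ge 1$ to $A_P\mathbf{x}\ge 1$. The paper's version is a bit shorter because it bijects only the \emph{symmetric difference}: every $1$ in $A_P$ not already present in $B_j$ sits at some position $q<t_j\le t_0\le i_{t_\ell}$, so any added $\mathbf{x}_q$ dominates any removed $\mathbf{x}_p$ (with $p\in\{t_j,\ldots,i_{t_j}\}$) directly via (2)--(3); there is no need to self-pair the ``high'' positions or analyze where the greedy matching lands inside $R$.

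One small wording issue in your write-up: the claim that the block $\{t_j,\ldots,i_{t_\ell}\}$ is contained in $R$ is literally true only when $j\le\ell$, since for $j>\ell$ one has $i_{t_\ell}>i_{t_j}$. Your use of $\min(b,p)$ still yields the correct conclusion in that case (there $b=l$ and $R=\{t_j,\ldots,i_{t_j}\}$ is the full interval), but you should phrase the containment as $\{t_j,\ldots,\min(i_{t_\ell},i_{t_j})\}\subseteq R$.
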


\begin{proof}
By Lemma \ref{reducevar}, we can assume
$n = i_s$.   Consider any row $A_P$ of
$A$.  By Theorem \ref{theo.dual1}, $P$ corresponds
to a monomial $m$ that is a Borel move
of exactly one of $\{
x_{t_k}\cdots x_{i_{t_k}},\ldots,
x_{t_0}\cdots x_{i_{t_0}}\}$.  Say
$m$ is a Borel move of  $x_{t_j}\cdots x_{i_{t_j}}$.

The $j$-th row of $B$ (which corresponds
to $x_{t_j}\cdots x_{i_{t_j}}$) is given by
$$B_j = (\underbrace{0,\ldots,0}_{t_j-1},\underbrace{1,\ldots,1}_{i_{t_j}-t_j+1},0,\ldots,0).$$
The rows  $A_P$ and $B_j$ have the same
number of $1$'s.   Additionally, $A_P$ is formed from
$B_j$ by swapping some of the $1$'s with
some of the $0$'s among the first $t_j-1$ spots.

Since $B_j{\bf x} \geq 1$, we have
${\bf x}_{t_j}+ \cdots + {\bf x}_{i_{t_j}} \geq 1.$
Note that $A_P{\bf x}$ is formed from 
$B_j{\bf x}$ by subtracting some ${\bf x}_{p}$'s with
$p \in \{t_j,\ldots,{i_{t_j}}\}$ and adding
in some ${\bf x}_q$'s with $q \in \{1,\ldots,{t_j-1}\}$.

If $p \in \{t_j,\ldots,i_{t_\ell}\}$, then
the hypotheses imply that ${\bf x}_q \geq {\bf x}_p$
for all $q \in \{1,\ldots,t_j-1\}$.
If $p \in \{i_{t_\ell},\ldots,i_{t_j}\}$,
then $t_j < t_0 \leq i_{t_{\ell}} \leq p
\leq i_{t_j}$.  But then 
${\bf x}_q \geq {\bf x}_{i_{t_\ell}} \geq {\bf x}_p$
for all $q \in \{1,\ldots,t_j-1\}$.  
But this means 
$$A_P{\bf x} \geq B_j{\bf x} \geq 1$$
because every time we subtract an ${\bf x}_p$
with $p \in \{t_j,\ldots,i_{t_j}\}$ we are
replacing it with an ${\bf x}_q$ with
$q \in \{1,\ldots,t_{j}-1\}$ which is larger.

The result now follows since $A_P{\bf x} \geq 1$
for all rows of $A$.
 \end{proof}

We can now bound
the Waldschmit constant of a square-free principal
Borel ideal in terms of $T(m),IT(m)$, and $\ell$.

\begin{theorem}\label{theo.aw}
Let $m=x_{i_1}\cdots x_{i_s}$ be a square-free monomial 
with $T(m) = (t_0,t_1,\ldots,
 t_k)$ and $IT(m) = (i_{t_0},i_{t_1},\ldots,i_{t_k})$, 
 and suppose $I = {\rm sfBorel}(m)$.  If 
 $\ell$ is the smallest integer such that $i_{t_\ell+1}
 < t_0 \leq i_{t_\ell}$, then
\begin{eqnarray*}
\widehat{\alpha}(I) & \leq & (t_0 -t_{\ell})\left(\frac{1}{i_{t_\ell}-t_\ell+1}\right)
+   (i_{t_\ell}-i_{t_{\ell+1}})\left(\frac{1}{i_{t_\ell}-t_{\ell}+1} \right) 
\\
&&
 + \cdots +(i_{t_{k-1}}-i_{t_k})\left(\frac{1}{i_{t_{k-1}}-t_{k-1}+1}
    \right) + i_{t_k}\left(\frac{1}{i_{t_k}-t_k+1}
    \right).
\end{eqnarray*}
\end{theorem}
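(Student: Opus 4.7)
The plan is to prove the upper bound by constructing an explicit nonnegative vector $\mathbf{x} \in \mathbb{R}^n$ that satisfies the three conditions of Lemma~\ref{submatrix}, and whose coordinate sum $\mathbf{1}^T\mathbf{x}$ equals the right-hand side of the claimed inequality. By Lemma~\ref{submatrix}, such an $\mathbf{x}$ then satisfies $A\mathbf{x} \geq \mathbf{1}$, so it is feasible for the linear program of Theorem~\ref{linprogthm1}, and hence $\widehat{\alpha}(I) \leq \mathbf{1}^T\mathbf{x}$, which is what we want.

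After reducing to the case $n = i_{t_0}$ via Lemma~\ref{reducevar}, I would set weights $c_j := \tfrac{1}{i_{t_j}-t_j+1}$ for $j=0,\dots,k$. Lemma~\ref{lem.tm}(3) gives $c_k > c_{k-1} > \cdots > c_0$. Using the convention $i_{t_{k+1}} = 0$, the half-open intervals $(i_{t_{j+1}}, i_{t_j}]$ for $j = 0,\ldots,k$ partition $\{1,\dots, i_{t_0}\}$, and I would define
\[
\mathbf{x}_p = \begin{cases} c_j & \text{if } p \in (i_{t_{j+1}}, i_{t_j}]\text{ for some }\ell \leq j \leq k,\\ c_\ell & \text{if } p \in (i_{t_\ell},\ i_{t_\ell}+(t_0-t_\ell)],\\ 0 & \text{otherwise.}\end{cases}
\]
The chain~\eqref{jumps} implies $i_{t_\ell}-t_\ell \leq i_{t_0}-t_0$, which ensures the ``tail'' interval $(i_{t_\ell},\ i_{t_\ell}+(t_0-t_\ell)]$ lies inside $\{1,\dots,n\}$, so $\mathbf{x}$ is well-defined.

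Conditions (2) and (3) of Lemma~\ref{submatrix} follow immediately from the ordering $c_k > c_{k-1} > \cdots > c_\ell$ and the definition of $\mathbf{x}$. To verify condition~(1), I would examine each inequality $\sum_{p=t_j}^{i_{t_j}} \mathbf{x}_p \geq 1$ in two regimes. For $j \geq \ell$, every $p \in \{t_j,\dots,i_{t_j}\}$ lies in some block $(i_{t_{r+1}}, i_{t_r}]$ with $r \geq j$, so $\mathbf{x}_p = c_r \geq c_j$, and the interval has exactly $1/c_j$ elements, giving the bound. For $j < \ell$, the interval $\{t_j,\ldots, i_{t_j}\}$ may extend past the support of $\mathbf{x}$, so I would instead count only the indices lying in $\{t_j,\dots, \min(i_{t_j},\ i_{t_\ell}+(t_0-t_\ell))\}$: using $t_j \leq t_0$ together with Lemma~\ref{lem.tm}(3), this count is at least $i_{t_\ell}-t_\ell+1 = 1/c_\ell$, and each such coordinate receives weight at least $c_\ell$.

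A blockwise summation then yields $\mathbf{1}^T\mathbf{x} = i_{t_k}c_k + \sum_{j=\ell}^{k-1}(i_{t_j}-i_{t_{j+1}})c_j + (t_0-t_\ell)c_\ell$, which matches the right-hand side of the theorem exactly. I expect the main technical obstacle to be the case $j < \ell$ of condition~(1): the naive ``all coordinates in the interval are at least $c_j$'' argument no longer applies, and one has to carefully track how $t_j$, $t_0$, $i_{t_\ell}$, and $i_{t_j}$ interact in order to guarantee that enough weight is accumulated.
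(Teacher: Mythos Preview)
Your proposal is correct and follows the same overall strategy as the paper: reduce to $n=i_{t_0}$ via Lemma~\ref{reducevar}, exhibit an explicit feasible vector for the linear program of Theorem~\ref{linprogthm1}, and verify feasibility by checking the three hypotheses of Lemma~\ref{submatrix}.

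The one noteworthy difference is the choice of feasible vector. On the first $i_{t_\ell}$ coordinates both constructions agree, assigning the constant $c_j=\tfrac{1}{i_{t_j}-t_j+1}$ on each block $(i_{t_{j+1}},i_{t_j}]$ for $j\geq\ell$. Past $i_{t_\ell}$ the paper spreads the remaining mass $(t_0-t_\ell)c_\ell$ across \emph{all} of $(i_{t_\ell},i_{t_0}]$, placing $\tfrac{t_{j-1}-t_j}{(i_{t_{j-1}}-i_{t_j})}\,c_\ell$ on each block $(i_{t_j},i_{t_{j-1}}]$ for $1\leq j\leq\ell$; you instead concentrate the same total mass as a constant $c_\ell$ on the short interval $(i_{t_\ell},\,i_{t_\ell}+t_0-t_\ell]$ and set the rest to zero. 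Both vectors have the same coordinate sum, and both satisfy the hypotheses of Lemma~\ref{submatrix}. Your vector is a bit cleaner to write down and makes the $j<\ell$ case of condition~(1) a direct counting argument (splitting on whether $i_{t_j}$ exceeds $i_{t_\ell}+t_0-t_\ell$, then using $t_j\leq t_0$ in one subcase and Lemma~\ref{lem.tm}(3) in the other); the paper's vector has full support and handles that case via a telescoping sum $\sum_{r=j+1}^{\ell}(t_{r-1}-t_r)=t_j-t_\ell$.
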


\begin{proof}  By Lemma \ref{reducevar}, we can
assume that the number of variables is $i_s =n$.
Set $a = i_{t_\ell}-t_\ell+1$, and consider the vector 
${\bf y} \in \mathbb{R}^{i_s= i_{t_0}}$ where
\begin{eqnarray*}
\mathbf{y}^T & =& \underbrace{\left(\frac{1}{i_{t_k}-t_k+1},\ldots,\frac{1}{i_{t_k}-t_k+1}\right.}
 _{i_{t_k}},
 \underbrace{\frac{1}{i_{t_{k-1}}-t_{k-1}+1},\ldots,\frac{1}{i_{t_{k-1}}-t_{k-1}+1}}_{i_{t_{k-1}}-i_{t_k}},\ldots, \\
&& \ldots
\underbrace{\frac{1}{a},\ldots,\frac{1}{a}}
 _{i_{t_\ell}-i_{t_\ell+1}},
 \underbrace{\frac{(t_{\ell-1}-t_\ell)}{(i_{t_{\ell-1}}-i_{t_\ell})a},\ldots,\frac{(t_{\ell-1}-t_\ell)}{(i_{t_{\ell}-1}-i_{t_\ell})a}}_{i_{t_{\ell-1}}-i_{t_\ell}},
 \underbrace{\frac{(t_{\ell-2}-t_{\ell-1})}{(i_{t_{\ell-2}}-i_{t_{\ell-1}})a},\ldots,\frac{(t_{\ell-2}-t_{\ell-1})}{(i_{t_{\ell-2}}-i_{t_{\ell-1}})a}}_{i_{t_{\ell-2}}-i_{t_{\ell-1}}},\\
&&\ldots,
 \underbrace{\left. \frac{(t_{0}-t_{1})}{(i_{t_{0}}-i_{t_1})a},\ldots,\frac{(t_{0}-t_{1})}{(i_{t_0}-i_{t_{1}})a}\right)}_{i_{t_{0}}-i_{t_{1}}}
.
\end{eqnarray*}

Let $A$ be the matrix of associated primes of $I$. 
We will use Lemma \ref{submatrix} to verify that
$A{\bf y} \geq {\bf 1}$.  Theorem \ref{linprogthm1} then
gives the required result after we sum all the entries of 
${\bf y}$.

It follows from Lemma \ref{lem.tm} (3) that 
\[\frac{1}{i_{t_j}-t_j+1} \geq 
\frac{1}{i_{t_{j-1}}-t_{j-1}+1}~~\mbox{for all
$j=1,\ldots,k$}.\]
These inequalities imply that the first $i_{t_\ell}$ entries  of ${\bf y}$ form a non-increasing sequence.  Thus, condition (2)
of Lemma \ref{submatrix} holds for ${\bf y}$.
In addition, it follows by Lemma \ref{lem.tm}  that 
$$\frac{t_{j-1}-t_j}{i_{t_{j-1}}-i_{t_j}} < 1 ~~\mbox{for $j=1,\ldots,\ell$.}$$
Hence $\frac{1}{a} \geq {\bf y}_r$ for all $r=i_{t_\ell},
\ldots,i_{t_0}$, and thus condition (3) of Lemma
\ref{submatrix} also holds for ${\bf y}$.

Let $B$ be the submatrix of $A$ where the $j$-th row
of $B$ corresponds to the associated prime of $I$ that 
is associated to $x_{t_j}\cdots x_{i_{t_j}}$.  
That is,  written 
as a row vector:
 $$B_j =(\underbrace{0,\ldots,0}_{t_j-1},\underbrace{1,\ldots,1}_{i_{t_j}-t_j+1},0,\ldots,0).$$
We now
show that $B$ and ${\bf y}$ satisfy condition (1) of
Lemma \ref{submatrix}, thus completing the proof.

Consider the $j$-th row of $B$, denoted $B_j$.  If $j \geq \ell$,
we then have
$$B_j\mathbf{y}\geq
B_j\begin{pmatrix}
{\bf 0} \\
\frac{1}{i_{t_j}-t_j+1}\\
\vdots\\
\frac{1}{i_{t_j}-t_j+1}\\
\mathbf{y}_{i_{t_j}+1}^{i_{t_0}}
\end{pmatrix} \geq 1$$
where $\mathbf{y}_{i_{t_j}+1}^{i_{t_0}}$ represents the last $i_{t_0}-i_{t_j}$ entries of $\mathbf{y}$,
the fraction $\frac{1}{i_{t_j}-t_j+1}$ appears
$i_{t_j}-t_j+1$ times, and the ${\bf 0}$ 
is the vector with $t_j-1$ zeroes.
Since every entry
of this new vector is less than or equal to the corresponding entry in ${\bf y}$, the first inequality holds.

Now suppose that $j < \ell$. 
Consequently, note that $t_j < t_0 \leq i_{t_\ell} <i_{t_j}$.  So we then have
    \begin{align*}
    B_j\mathbf{y}&=\sum_{r=1}^{i_s} (B_j)_{r}\mathbf{y}_r =\sum_{r=t_j}^{i_{t_\ell}}\mathbf{y}_r+\sum_{r=i_{t_\ell}+1}^{i_{t_j}}\mathbf{y}_r\\
    & \geq \frac{i_{t_\ell}-t_j+1}{i_{t_\ell}-t_\ell+1}+
    \frac{t_{\ell-1}-t_{\ell}}{i_{t_\ell}-t_\ell+1} + \cdots +
    \frac{t_{j}-t_{j+1}}{i_{t_\ell}-t_{\ell}+1} \\
    &=\frac{i_{t_\ell}-t_j+1}{i_{t_\ell}-t_\ell+1}+\frac{t_j-t_\ell}{i_{t_\ell}-t_\ell+1} =1.
    \end{align*}
The inequality follows from the fact that
${\bf y}_r \geq \frac{1}{a}$ for all $1 \leq r \leq i_{t_\ell}$.
Hence, $B_j{\bf y} \geq 1$ for all rows of $j$, so condition
(1) of Lemma \ref{submatrix} also holds.
\end{proof}

We derive the following corollary; in the next section
we will show that this bound is exact under additional
hypotheses.

\begin{corollary}\label{cor.upperbound}
    Let $m=x_{i_1}\cdots x_{i_s}$ be a square-free monomial 
with $T(m) = (t_0,t_1,\ldots,
 t_k)$ and $IT(m) = (i_{t_0},i_{t_1},\ldots,i_{t_k})$, 
 and suppose $I = {\rm sfBorel}(m)$.  If 
 $\ell$ is the smallest integer such that $i_{t_\ell+1}
 < t_0 \leq i_{t_\ell}$, then
 $$\widehat{\alpha}(I) \leq \frac{t_0-t_\ell+i_{t_\ell}}{i_{t_k}-t_k+1}.$$
\end{corollary}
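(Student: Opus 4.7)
The plan is to start from the explicit upper bound provided by Theorem \ref{theo.aw} and then coarsen it by replacing each coefficient $\frac{1}{i_{t_j}-t_j+1}$ that appears with the largest such coefficient, after which a telescoping identity collapses the sum to the stated expression.

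More concretely, by Lemma \ref{lem.tm}(3) the sequence $i_{t_0}-t_0,\, i_{t_1}-t_1,\,\ldots,\,i_{t_k}-t_k$ is strictly decreasing, and hence
\[
\frac{1}{i_{t_j}-t_j+1} \;\leq\; \frac{1}{i_{t_k}-t_k+1} \qquad \text{for all } j=\ell,\ell+1,\ldots,k.
\]
I would use this to bound every fractional coefficient occurring in the right-hand side of Theorem \ref{theo.aw} by the single value $\frac{1}{i_{t_k}-t_k+1}$, and factor it out. What remains inside the brackets is
\[
(t_0-t_\ell) \,+\, (i_{t_\ell}-i_{t_{\ell+1}}) \,+\, (i_{t_{\ell+1}}-i_{t_{\ell+2}}) \,+\, \cdots \,+\, (i_{t_{k-1}}-i_{t_k}) \,+\, i_{t_k}.
\]

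The second through last summands telescope to $i_{t_\ell}$, so the bracket reduces to $t_0-t_\ell+i_{t_\ell}$, yielding exactly
\[
\widehat{\alpha}(I)\;\leq\;\frac{t_0-t_\ell+i_{t_\ell}}{i_{t_k}-t_k+1},
\]
as required. There is no real obstacle here; the only thing to be careful about is to confirm that the coefficient in front of $(t_0-t_\ell)$ in the statement of Theorem \ref{theo.aw} is indeed $\frac{1}{i_{t_\ell}-t_\ell+1}$ (which is already majorised by $\frac{1}{i_{t_k}-t_k+1}$ by the above monotonicity), so that the same uniform bound can be applied to every term before telescoping.
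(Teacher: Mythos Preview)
Your argument is correct and is essentially the same as the paper's: bound each $\frac{1}{i_{t_j}-t_j+1}$ for $\ell\le j\le k$ above by $\frac{1}{i_{t_k}-t_k+1}$ (via Lemma~\ref{lem.tm}(3)) and telescope the remaining sum from Theorem~\ref{theo.aw}. The paper states the inequality and then simply says ``the result now follows,'' whereas you have written out the telescoping explicitly.
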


\begin{proof}
Recall that $t_0 = s$.
Note that $$\frac{1}{i_{t_k}- t_k+1} \geq 
\frac{1}{i_{t_j}-t_j+1} ~~\mbox{for $\ell \leq j \leq k$}.$$
The result now follows from Theorem \ref{theo.aw} and this inequality.
\end{proof}

\begin{example} Our bound in Theorem \ref{theo.aw} is sharp.
For example, if $m=x_2x_3x_5x_6x_8x_{10}$, then we have $T(m)=(6,5,4,2)$ and $IT(m)=(10,8,6,3)$.  For
this monomial, $\ell =2$ since $t_2=4$ and 
$t_0 = 6 \leq i_{4} = i_{t_\ell}=6$. Then
    $$\hat\alpha(\mathrm{sfBorel}(m))\leq\frac{2}{3}+\frac{3}{3}+\frac{3}{2}=\frac{19}{6},$$
    and this is the actual Waldschmidt constant.
\end{example}


\section{Some exact values and  lower bounds}

In this section we compute the exact value of 
the Waldschmidt constant 
of  principal square-free Borel ideals under some
additional hypotheses.  We then present a theorem
that can be used to find lower bounds recursively.

We begin with the following exact formula.

 \begin{theorem}\label{aw}
  Let $m=x_{i_1}\cdots x_{i_s}$ be a square-free monomial 
with $T(m) = (t_0,t_1,\ldots,
 t_k)$ and $IT(m) = (i_{t_0},i_{t_1},\ldots,i_{t_k})$, 
 and suppose $I = {\rm sfBorel}(m)$.  If 
$t_0 \leq i_{t_k}$, then
 $$\widehat{\alpha}(I) = 1 + \frac{s-1}{i_{t_k}-t_k+1}.$$
\end{theorem}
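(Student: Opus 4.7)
The plan is to prove both inequalities, drawing the upper bound from Corollary \ref{cor.upperbound} and establishing the lower bound by exhibiting an explicit dual-feasible solution to the linear program in Theorem \ref{linprogthm1}.

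For the upper bound, the hypothesis $t_0 \leq i_{t_k}$, combined with the strict decrease $i_{t_0} > i_{t_1} > \cdots > i_{t_k}$ from Lemma \ref{lem.tm}, forces the index $\ell$ in Corollary \ref{cor.upperbound} to equal $k$. Substituting $\ell = k$ and simplifying yields
$$\widehat{\alpha}(I) \leq \frac{t_0 - t_k + i_{t_k}}{i_{t_k}-t_k+1} = 1 + \frac{s-1}{i_{t_k}-t_k+1},$$
which matches the claimed value.

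For the lower bound, set $M = i_{t_k}$, $d = M-t_k+1$, $r = M-t_0+1$, and $B = \{M+1,\ldots,i_{t_0}\}$. I would introduce two families of associated primes of $I$. The first family $\mathcal{R}$ consists of $\langle x_j : j \in R\rangle$ for every $d$-element subset $R \subseteq \{1,\ldots,M\}$: each such $R$ does correspond to a square-free Borel move of $m_k = x_{t_k}\cdots x_M$, because $m_k$ is a block of $d$ consecutive variables filling $\{t_k,\ldots,M\}$, so any sorted $d$-tuple $(a_1,\ldots,a_d) \subseteq \{1,\ldots,M\}$ automatically satisfies $a_j \leq t_k+j-1 = M-d+j$. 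The second family $\mathcal{Q}$ consists of $Q_A = \langle x_j : j \in A \cup B\rangle$ for each $r$-element subset $A \subseteq \{1,\ldots,M\}$. Each $Q_A$ arises from the Borel move $A \cup B$ of $m_0$; to invoke Theorem \ref{theo.dual1} I need $Q_A$ to be a \emph{minimal} generator of the ideal there. This is verified by ruling out divisibility: (i) no size-$d$ Borel move of $m_k$ can divide it, since $|Q_A \cap \{1,\ldots,M\}| = r < d$ (because $t_0 > t_k$), and (ii) any Borel move $W$ of an intermediate $m_j$ with $0 < j < k$ that divided $Q_A$ would, once the Borel inequality at the position just past $M$ is unpacked, need at least $M - t_j + 1$ of its elements in $\{1,\ldots,M\}$; this forces $r \geq M-t_j+1$, and hence $t_j \geq t_0$, contradicting $t_j < t_0$.

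With both families in hand, assign weight $z = \frac{M-r}{M\binom{M-1}{d-1}}$ to each prime of $\mathcal{R}$ and weight $w = \frac{1}{\binom{M}{r}}$ to each prime of $\mathcal{Q}$. A direct count shows that every variable $x_j$ with $j \leq M$ lies in exactly $\binom{M-1}{d-1}$ primes of $\mathcal{R}$ and $\binom{M-1}{r-1}$ primes of $\mathcal{Q}$, contributing total weight $\frac{M-r}{M} + \frac{r}{M} = 1$, while every $x_j$ with $j \in B$ lies in all $\binom{M}{r}$ primes of $\mathcal{Q}$, again contributing weight $1$. Thus the vector is dual-feasible, and its objective is
$$\binom{M}{d}z + \binom{M}{r}w \;=\; \frac{M}{d}\cdot\frac{M-r}{M} + 1 \;=\; 1 + \frac{s-1}{d},$$
using the identity $\binom{M}{d} = \frac{M}{d}\binom{M-1}{d-1}$ together with $M-r = t_0-1 = s-1$. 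By Theorem \ref{linprogthm1} this certifies the matching lower bound, completing the proof. The main obstacle I anticipate is the minimality verification (ii) for the $Q_A$'s; once that case analysis is settled, the rest of the argument reduces to a short binomial calculation.
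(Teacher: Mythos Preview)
Your proposal is correct and follows essentially the same approach as the paper: both obtain the upper bound from Corollary~\ref{cor.upperbound} with $\ell=k$, and both certify the lower bound by the identical dual-feasible vector supported on the Borel moves of $x_{t_k}\cdots x_{i_{t_k}}$ together with those Borel moves of $x_{t_0}\cdots x_{i_{t_0}}$ that contain $\{x_{i_{t_k}+1},\ldots,x_{i_{t_0}}\}$, with the same weights (your $z$ and $w$ agree with the paper's after the binomial symmetry $\binom{M-1}{d-1}=\binom{i_{t_k}-1}{t_k-1}$ and $\binom{M}{r}=\binom{i_{t_k}}{s-1}$). If anything, you are slightly more explicit than the paper in verifying that the primes in $\mathcal{Q}$ are genuinely minimal generators of the square-free Borel ideal in Theorem~\ref{theo.dual1}, which the paper uses without comment.
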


\begin{proof}
The hypotheses imply that $\ell =k$,
with $\ell$ as in Corollary \ref{cor.upperbound}.
Consequently, Corollary \ref{cor.upperbound} then shows that
$$\widehat{\alpha}(I)
\leq 
\frac{t_0-t_{k}+i_{t_k}}{i_{t_k}-t_k+1}
= \frac{s-1 + (i_{t_k}-t_k+1)}{i_{t_k}-t_k+1}$$
where we use the fact that $t_0 = s$.

For $j=0,\ldots,k$, let $P_j$
denote the associated
prime of $I$ that is associated
with the monomial $x_{t_j}\cdots x_{i_{t_j}}$
using the correspondence of Theorem \ref{theo.dual1}.  For any
other associated prime $P \in {\rm ass}(I)$,
we will write $P \sim P_j$ if the prime $P$
is associated to a monomial $m$ that can be
obtained from $x_{t_j}\cdots x_{i_{t_j}}$ via
a Borel move.  Note that each associated prime
satisfies $P \sim P_j$ for exactly one
$j \in \{0,\ldots,k\}$.

Let $A$ be the matrix of associated primes
of $I$.  There are $|{\rm ass}(I)|$ rows,
and we write $A_P$ for the row indexed
by the associated 
prime $P$.
Let $\mathbf{x}\in\mathbb{R}^{|\mathrm{ass}(I)|}$.  We will write ${\bf x}_P$ to denote
the corresponding coordinate in ${\bf x}.$
That is, if $P$ indexes the $i$-th row
of $A$, then ${\bf x}_P$ denotes the $i$-th
coordinate of ${\bf x}$.

With the above notation, we 
now define the vector ${\bf y} \in \mathbb{R}^{|{\rm ass}(I)|}$ as follows:
$$\mathbf{y}_P=\begin{cases}
\frac{1}{\binom{i_{t_k}-1}{t_k-1}}\frac{s-1}{i_{t_k}}& P\sim P_k\\
\frac{1}{\binom{i_{t_k}}{s-1}}& P\sim P_0~~
\mbox{and $\langle x_{i_{t_k}+1},\ldots,x_{i_s}\rangle \subseteq P$}
\\
0&\text{otherwise.}
\end{cases}$$
Note that the second criterion means we are only interested in those
prime ideals that arise from Borel moves of $x_{t_0}\cdots x_{i_{t_0}}$
that also contain the variables $\{x_{i_{t_k}+1},\ldots,
x_{i_s = i_{t_0}}\}$.

The $r$-th row of $A^T$ is such that $(A^T)_{r,P}=1$ if $x_r \in P$ and $0$ otherwise. Then
$$(A^T\mathbf{y})_r=\sum_{x_r \in P}\mathbf{y}_P.$$
 So, in order to compute $A^T\mathbf{y}$, we have to compute how many times $x_r$ appears in some $P$ such that $P\sim P_k$, or $P\sim P_0$ and 
 $ \langle x_{i_{t_k}+1},\ldots,x_{i_s} \rangle \subseteq P $.

Observe that there are $\binom{i_{t_k}}{t_k-1}$ associated primes $P$ such that $P\sim P_k$.   This number
is the number of Borel moves that can be made from
$x_{t_k}\cdots x_{i_{t_k}}$.  There are $\binom{i_{t_k}}{s-1} = 
\binom{i_{t_k}}{i_{t_k}+1-s}$ associated primes $P$ such that $P\sim P_0$ and $\langle x_{i_{t_k}+1},\ldots,x_{i_s} \rangle \subseteq P$.   To see why this is true, suppose
that we consider a Borel move of $x_{t_0}\cdots x_{i_{t_0}}$
that is also divisible by $x_{i_{t_k}+1}\cdots x_{i_{t_0}}$, i.e.,
the Borel move has the form ${m'}(x_{i_{t_k}+1}\cdots x_{i_{t_0}})$ where
${m'}$ is a degree $i_{t_k}-t_0+1$ monomial in $\{x_1,\ldots,x_{i_{t_k}}\}$.
Since $s =t_0 \leq i_{t_k}$,  there are $\binom{i_{t_k}}{s-1}\geq 1$ possible
$m'$.

For $i_{t_k}<r \leq i_{t_0}$ we have that $x_r$ appears in every $P$ such that $\mathbf{y}_P\neq 0$ and $P\sim P_0$. Therefore
$$(A^T\mathbf{y})_r=\binom{i_{t_k}}{s-1} \frac{1}{\binom{i_{t_k}}{s-1}} = 1.$$

Now, for $1\leq r \leq i_{t_k}$, $x_r$ appears in $\binom{i_{t_k}-1}{t_k-1}$ elements $P$ such that $P\sim P_k$, and in $\binom{i_{t_k}-1}{s-1}$ elements $P$ such that $P\sim P_0$ and $\langle x_{i_{t_k}+1},\ldots,x_{i_s} \rangle \subset P$. Therefore
\begin{align*}
(A^T\mathbf{y})_r &=\binom{i_{t_k}-1}{t_k-1}\left(
\frac{1}{\binom{i_{t_k}-1}{t_k-1}}\frac{s-1}{i_{t_k}}\right)+
\binom{i_{t_k}-1}{s-1}
\frac{1}{\binom{i_{t_k}}{s-1}}\\
&=\frac{s-1}{i_{t_k}}+\frac{i_{t_k}-s+1}{i_{t_k}} =1.
\end{align*}

This proves that $A^T\mathbf{y}=\mathbf{1}$. Finally
\begin{align*}
\mathbf{y}^T\mathbf{1}&=\binom{i_{t_k}}{t_k-1}
\left(\frac{1}{\binom{i_{t_k}-1}{t_k-1}}\frac{s-1}{i_{t_k}}\right)+
\binom{i_{t_k}}{s-1}
\left(\frac{1}{\binom{i_{t_k}}{s-1}}\right)\\
&=\frac{i_{t_k}}{i_{t_k}-t_k+1}\frac{s-1}{i_{t_k}}+1 =1+\frac{s-1}{i_{t_k}-t_k+1}.
\end{align*}

Due to the duality theorem, we can conclude the result.
\end{proof}

 We arrive at the following corollary which was highlighted in 
 the introduction.

\begin{corollary}\label{sucvar}\label{rationalcor}
Let $I = {\rm sfBorel}(x_ix_{i+1}\cdots x_{i+l})$.  Then
    $$\widehat\alpha(I)=\frac{i+l}{i}.$$
Consequently, for every rational number
$\frac{a}{b} \geq 1$, 
there exists a square-free principal Borel ideal $I$
such that $\widehat{\alpha}(I) = \frac{a}{b}$. 
\end{corollary}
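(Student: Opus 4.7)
The plan is to reduce the corollary to a direct application of Theorem \ref{aw}, and then to handle the "consequently" part by an explicit choice of $i$ and $l$.

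First I would compute $T(m)$ and $IT(m)$ for the monomial $m = x_i x_{i+1} \cdots x_{i+l}$. Since the indices $i_1 = i, i_2 = i+1, \ldots, i_{l+1} = i+l$ are consecutive, the sequence $(i_j - j)$ is constant (equal to $i-1$), so there are no strict jumps. By the characterization in Remark \ref{redundantgens2}, this forces $k = 0$, with $T(m) = (t_0) = (l+1)$ and $IT(m) = (i_{t_0}) = (i+l)$. In particular $s = l+1$, $t_k = l+1$, and $i_{t_k} = i+l$.

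Next I would verify the hypothesis of Theorem \ref{aw}, namely $t_0 \leq i_{t_k}$. This is the inequality $l+1 \leq i+l$, which holds whenever $i \geq 1$. Applying the theorem gives
$$\widehat{\alpha}(I) \;=\; 1 + \frac{s-1}{i_{t_k}-t_k+1} \;=\; 1 + \frac{l}{(i+l)-(l+1)+1} \;=\; 1 + \frac{l}{i} \;=\; \frac{i+l}{i},$$
which is the first claim of the corollary.

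For the consequence, given a rational $\tfrac{a}{b} \geq 1$ (with $a,b$ positive integers and $a \geq b$), I would take $i = b$ and $l = a - b \geq 0$. If $l \geq 1$ then the above computation yields $\widehat{\alpha}({\rm sfBorel}(x_b x_{b+1} \cdots x_a)) = \tfrac{a}{b}$. If $l = 0$, then $a = b$ and $I = \langle x_b \rangle$, whose only associated prime is $\langle x_b \rangle$, so $\widehat{\alpha}(I) = 1 = \tfrac{a}{b}$ trivially. Either way we obtain a square-free principal Borel ideal realizing the prescribed Waldschmidt constant, completing the proof. There is no real obstacle here: the work has all been done in Theorem \ref{aw}, and the whole corollary amounts to observing that the "consecutive variables" case is the simplest instance, with $k = 0$, and that the resulting formula $\tfrac{i+l}{i}$ surjects onto the rationals $\geq 1$.
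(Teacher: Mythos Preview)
Your approach is essentially identical to the paper's: compute $T(m)=(l+1)$, $IT(m)=(i+l)$, check $t_0\le i_{t_k}$, apply Theorem~\ref{aw}, and then choose $i=b$, $l=a-b$ for the realization statement. One small slip: when $l=0$ you write $I=\langle x_b\rangle$, but in fact ${\rm sfBorel}(x_b)=\langle x_1,\ldots,x_b\rangle$ (Borel moves send $x_b$ to every $x_j$ with $j<b$). This does not affect the conclusion, since $\langle x_1,\ldots,x_b\rangle$ is prime with $\widehat\alpha=1$, and in any case your formula $\tfrac{i+l}{i}$ already gives the correct value $1$ when $l=0$ without needing a separate case; the paper sidesteps the issue by taking $m=x_1$ in the boundary case.
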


\begin{proof}
    We have $T(m)=(l+1)$ and 
    $IT(m)=(i+l)$. Now apply Theorem \ref{aw}.

For the second statement,
if $\frac{a}{b} = 1$, we can take $I = {\rm sfBorel}(x_1) = \langle x_1
\rangle$, from which it follows that $\widehat{\alpha}(I) = 1$.
If $\frac{a}{b} > 1$, i.e., $a>b$, we have $\frac{a}{b} = \frac{b+(a-b)}{b}$.
Then the result follows if we take $m = x_bx_{b+1}\cdots x_a =
x_b x_{b+1} \cdots x_{b+(a-b)}$.
\end{proof}

\begin{remark}
When $I = {\rm sfBorel}(x_ix_{i+1}\cdots x_n)$, then $I$ is generated
by all the square-free monomials of degree $n-i+1$ in $S$.  The Waldschmidt
constants for these ideals were first computed in
\cite[Theorem 7.5]{WalSQF}.
\end{remark}

We  now give a lower bound for the Waldschmidt
constant of a square-free principal Borel ideal
in terms of a smaller square-free principal
Borel ideal.

\begin{theorem}\label{lowerboundresult}
Let $m=x_{i_1}\cdots x_{i_s}$ be a square-free monomial 
with $T(m) = (t_0,t_1,\ldots,
 t_k)$ and $IT(m) = (i_{t_0},i_{t_1},\ldots,i_{t_k})$, 
 and suppose $I = {\rm sfBorel}(m)$.  Suppose that
 $\ell$ is the smallest integer such that $i_{t_{\ell+1}}
 < t_0 \leq i_{t_\ell}$. Define $\nu=i_{t_{\ell+1}}+1$. Then
 \[\widehat{\alpha}(I) \geq \widehat{\alpha}({\rm sfBorel}(x_{i_1}\cdots x_{i_{t_{\ell+1}}}))
+1 + \frac{t_0-\nu}{i_{\nu}-\nu+1} .\]
\end{theorem}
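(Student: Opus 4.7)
The plan is to invoke LP duality: by Theorem \ref{linprogthm1}, it suffices to exhibit a nonnegative vector $\mathbf{y}$ satisfying $A(I)^T\mathbf{y}\le\mathbf{1}$ whose coordinates sum to the target value. After reducing to the ring $\mathbb{K}[x_1,\ldots,x_{i_{t_0}}]$ via Lemma \ref{reducevar}, I would partition the variables into the disjoint blocks $V_1=\{x_1,\ldots,x_{\nu-1}\}$ and $V_2=\{x_\nu,\ldots,x_{i_{t_0}}\}$, noting that $V_1=\{x_1,\ldots,x_{i_{t_{\ell+1}}}\}$. The vector $\mathbf{y}$ will be built as $\mathbf{y}^{(1)}+\mathbf{y}^{(2)}$, with the two summands supported on associated primes of $I$ lying entirely inside $V_1$ and $V_2$ respectively; since $V_1\cap V_2=\emptyset$, the constraints decouple across the two blocks and feasibility of $\mathbf{y}$ reduces to feasibility of each piece separately.

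For the $V_1$ piece, take $\mathbf{y}^{(1)}$ to be an optimal dual vector for $I_1={\rm sfBorel}(x_{i_1}\cdots x_{i_{t_{\ell+1}}})$. By Theorem \ref{theo.dual1} the associated primes of $I_1$ correspond to minimal generators of ${\rm sfBorel}(\{x_{t_j}\cdots x_{i_{t_j}}:\ell+1\le j\le k\})$, all of which are supported in $V_1$. Lemma \ref{lem.tm}(3) makes the degrees $i_{t_j}-t_j+1$ strictly decreasing in $j$, so a minimal generator arising from an index $j\ge\ell+1$ cannot be divisible by any Borel move of the strictly higher-degree generators $x_{t_{j'}}\cdots x_{i_{t_{j'}}}$ with $j'\le\ell$. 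Hence these generators remain minimal in the full Borel ideal describing ${\rm ass}(I)$, and $\mathbf{y}^{(1)}$ lifts to a feasible partial dual for $I$ that contributes $\widehat{\alpha}(I_1)$ to the objective.

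For the $V_2$ piece, set $\ell^*=\max\{r:t_r\ge\nu\}$ (so $\ell^*\le\ell$) and consider the shifted monomial
\[m''=x_{i_\nu-\nu+1}\,x_{i_{\nu+1}-\nu+1}\cdots x_{i_{t_0}-\nu+1}\in\mathbb{K}[x_1,\ldots,x_{i_{t_0}-\nu+1}].\]
A direct translation gives $T(m'')=(t_0-\nu+1,\,t_1-\nu+1,\ldots,t_{\ell^*}-\nu+1)$ and $IT(m'')=(i_{t_0}-\nu+1,\ldots,i_{t_{\ell^*}}-\nu+1)$. The minimality of $\ell$ forces $i_{t_j}\ge t_0$ for every $j\le\ell$, which verifies the hypothesis of Theorem \ref{aw} applied to $m''$. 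Moreover, no jump position $t_r$ lies in $[\nu,t_{\ell^*}-1]$ (any $t_r\ge\nu$ satisfies $r\le\ell^*$, hence $t_r\ge t_{\ell^*}$), so $i_j-j$ is constant on $[\nu,t_{\ell^*}]$, giving $i_{t_{\ell^*}}-t_{\ell^*}=i_\nu-\nu$. Theorem \ref{aw} then yields
\[\widehat{\alpha}({\rm sfBorel}(m''))=1+\frac{t_0-\nu}{i_{t_{\ell^*}}-t_{\ell^*}+1}=1+\frac{t_0-\nu}{i_\nu-\nu+1}.\]
Shifting variable indices by $\nu-1$ carries the generators that describe ${\rm ass}({\rm sfBorel}(m''))$ in Theorem \ref{theo.dual1} onto the family $\{x_{t_j}\cdots x_{i_{t_j}}:0\le j\le\ell^*\}$, and since any monomial with all indices $\ge\nu$ cannot be divisible by a Borel move of $x_{t_j}\cdots x_{i_{t_j}}$ with $j>\ell^*$ (such a move contains $x_{j_1}$ with $j_1\le t_j<\nu$), minimality of generators is preserved under this correspondence. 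An optimal dual for ${\rm sfBorel}(m'')$ therefore lifts to a feasible partial dual for $I$ supported in $V_2$, contributing $1+\frac{t_0-\nu}{i_\nu-\nu+1}$ to the objective.

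Adding the two pieces produces a feasible dual vector for $I$ with total value $\widehat{\alpha}(I_1)+1+\frac{t_0-\nu}{i_\nu-\nu+1}$, yielding the claimed bound. The main obstacle I expect is verifying carefully that the associated primes of both sub-ideals $I_1$ and ${\rm sfBorel}(m'')$ really do embed as associated primes of $I$ through Theorem \ref{theo.dual1}; this rests on the degree-monotonicity from Lemma \ref{lem.tm}(3) together with the index-range argument above that restricts attention to monomials supported in $V_2$.
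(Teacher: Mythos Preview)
Your proposal is correct and follows essentially the same approach as the paper: both split the variable set at $\nu$, lift an optimal dual for $I_1={\rm sfBorel}(x_{i_1}\cdots x_{i_{t_{\ell+1}}})$ on the left block, apply Theorem~\ref{aw} to the truncated/shifted monomial on the right block (your $\ell^*$ is the paper's $p$, and your shifted $m''$ is the paper's $m_2$ read in the relabeled ring $\mathbb{K}[x_\nu,\ldots,x_{i_{t_0}}]$), and then combine via LP duality. Your justification that the associated primes of each piece embed in ${\rm ass}(I)$---using the degree monotonicity of Lemma~\ref{lem.tm}(3) for the $V_1$ block and the smallest-index bound $a_1\le t_j<\nu$ for Borel moves with $j>\ell^*$ on the $V_2$ block---is exactly the argument the paper gives, stated a bit more explicitly.
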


\begin{proof}

By Lemma~\ref{reducevar}, we can assume we are working in
the polynomial ring  $\mathbb{K}[x_1,\ldots,x_{i_{t_0}}]$.
Consider the monomials
$$m_1 = x_{i_1}x_{i_2}\cdots x_{i_{t_{\ell+1}}} \in \mathbb{K}[x_1,\ldots,x_{i_{t_{\ell+1}}}]$$
and
$$m_2 = x_{i_{\nu}}x_{i_{\nu}+1}\cdots x_{i_{t_0}} \in \mathbb{K}[x_\nu,\ldots,x_{i_{t_0}}].$$
Observe that while $m_1m_2|m$, $m$ is not necessarily this product.

Let $I_1 = {\rm sfBorel}(m_1)$ and $I_2 = {\rm sfBorel}(m_2)$, in their respective 
rings, and furthermore, let $A(I_1)$ and $A(I_2)$ be the corresponding matrices
of associated primes.

Take $p$ to be the biggest integer such that $\nu\leq t_{p}$.  Then
$$T(m_2) = (t_0-\nu+1,t_1-\nu+1,\ldots,t_{p}-\nu+1)$$ 
and 
$$IT(m_2) = (i_{t_0}-\nu+1,i_{t_1}-\nu+1,\ldots,i_{t_{p}}-\nu+1).$$  
We have $p\leq\ell$, then $i_{t_{p}}\geq t_0$. So by Theorem~\ref{aw} we have

$$\widehat{\alpha}(I_2) = 1 + \frac{t_0-\nu}{i_{t_{p}}-t_{p}+1}.$$

Observe that $i_\nu-\nu+1=i_{t_p}-t_p+1$, since $t_{p+1}<\nu\leq t_{p}$, meaning that $x_{\nu}\cdots x_{i_\nu}$ is a Borel movement of $x_{t_p}\cdots x_{i_{t_p}}$.

We claim that
$$\begin{bmatrix} A(I_1)&\mathbf{0}\\ \mathbf{0}&A(I_2)\end{bmatrix}$$

\noindent is a submatrix of $A(I)$, the matrix of associated primes of $I$.

First notice that any row of $A(I_1)$ is comes from a Borel movement of a corresponding $x_tx_{t+1}\cdots x_{i_t}$ for some $t\in T(m_1)$. By Theorem~\ref{theo.dual1}, these are also associated primes of $I$, implying that $[A(I_1)\ \mathbf{0}]$ is a submatrix of $A(I)$.

Now, for any $\ell+1>u>p$, $x_{t_u}\cdots x_{i_{t_u}}$ corresponds to a row of $A(I)$, and any of its Borel movements contain at least one $x_j$ with $j<\nu$, otherwise $\nu\leq t_u$, contradicting the choice of $p$. This means for a row $R$ in $[\mathbf{0}\ A(I_2)]$,
there cannot exist a row $R'$ of $A(I)$ with $\mathrm{supp}\ R'\subsetneq\mathrm{supp}\ R$. Thus any associated prime of $I_2$ can be viewed as an associated prime of $I$ by Theorem~\ref{theo.dual1}. Thus $[\mathbf{0}\ A(I_2)]$ is a submatrix of $A(I)$ and by our choice of $\nu$.

Let $\mathbf{y}_1$ and $\mathbf{y}_2$ be such that
    $$A(I_1)^T\mathbf{y}_1\leq\mathbf{1},~~ \mathbf{1}^T\mathbf{y}_1=\widehat\alpha(I_1) ~~\mbox{and}~~
    A(I_2)^T\mathbf{y}_2\leq\mathbf{1},~~ \mathbf{1}^T\mathbf{y}_2=\widehat\alpha(I_2).$$
    After permuting rows, we can assume that
    $$A(I) = \begin{bmatrix}
    A(I_1)&\mathbf{0}\\
    \mathbf{0}&A(I_2)\\
    B_1 & B_2\end{bmatrix}$$
where $B_1,B_2$ are some appropriately sized matrices.  Set 
${\bf z} = ({\bf y}_1, {\bf y}_2,{\bf 0})$, where ${\bf 0}$ is a 
vector of zeroes, where the number of zeroes in this vector are the same
as the number of rows as $B$.  Then $A(I)^T\mathbf{z}\leq\mathbf{1}$.
Thus, by the dual version of Theorem \ref{linprogthm1}, we have
$$ \widehat{\alpha}(I) \geq \widehat{\alpha}(I_1) + \widehat{\alpha}(I_2)
= \widehat{\alpha}(I_1) +1+\frac{t_0-\nu}{i_\nu-\nu+1}.$$
\end{proof}

Theorem \ref{lowerboundresult} reduces the problem of finding
a lower bound on the principal square-free Borel ideal
$m = x_{i_1}\cdots x_{i_s}$ to finding a lower bound on the
principal square-free Borel ideal of $x_{i_1}\cdots x_{i_{t_\ell+1}}$. 
Note one can now reapply Theorem \ref{lowerboundresult} to this smaller
ideal.  At some point, the hypotheses of Theorem \ref{aw} will hold, which
stops our recursive calculation. 

This idea can be formally expressed as a formula, provided one is willing
to introduce even further notation (involving further subscripts on our subscripts).  Instead, we provide the following example in the hope of 
being more illuminating.

\begin{example}
Consider the monomal 
$$m= x_3x_4x_5x_8x_9x_{10}x_{48}x_{49}x_{50}x_{98}x_{99}x_{100} \in \mathbb{K}[x_1,\ldots,x_{100}]$$
and let $I = {\rm sfBorel}(m)$.
For this monomial $T(m) = (12,9,6,3)$ and $IT(m) = (100,50,10,5)$.
Since $i_{t_2}=i_6 = 10 < t_0 =12 < i_{t_1}=50$, then $\nu=11$ and Theorem \ref{lowerboundresult}
gives
$$\widehat{\alpha}(I) \geq \widehat{\alpha}(I_1) + 1+\frac{12-11}{98-10+1}=\widehat{\alpha}(I_1)+\frac{90}{89}$$
where $I_1 = {\rm sfBorel}(x_3x_4x_5x_8x_9x_{10}) = {\rm sfBorel}(m_1)$.
For this new monomial, we have $T(m_1) = (6,3)$ and $IT(m_1) = (10,5)$.
Again using Theorem \ref{lowerboundresult}, we get
$$\widehat{\alpha}(I_1) \geq \widehat{\alpha}(I_2) + 1+\frac{6-6}{10-6+1}
= \widehat{\alpha}(I_2) + 1$$
where $I_2 = {\rm sfBorel}(x_3x_4x_5)$.  Then by Theorem \ref{aw} (or in
this case, Corollary \ref{rationalcor}), we have $\widehat{\alpha}(I_2)
= \frac{5}{3}$.  Hence 
\[\widehat{\alpha}(I) \geq \frac{5}{3} + 1 + \frac{90}{89} = \frac{982}{267}.\]
Note that if we apply the upper bound of Theorem \ref{theo.aw}
we get
$$3.6904 \approx \frac{155}{42} \geq \widehat{\alpha}(I) \geq \frac{982}{267} \approx 3.6779.$$
\end{example}

We finish our paper with a result that
allows us to make small
changes to the generator of the 
square-free principal Borel without
changing the Waldschmidt constant.
\begin{theorem}
Let $I={\rm sfBorel}(x_{i_1}\cdots x_{i_{s-1}}x_{i_s})$ and 
$J={\rm sfBorel}(x_{i_1}\cdots x_{i_{s-1}}x_{{i_s}+r})$ for $r \in \mathbb{N}$. Then
     $\widehat\alpha(I)=\widehat\alpha(J).$
 \end{theorem}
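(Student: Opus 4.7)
The plan is to show that the LPs from Theorem \ref{linprogthm1} attached to $I$ and $J$ have the same optimum value, by reducing each to a common canonical form via the permutation symmetry of the ``tail'' variables that appear only in $J$.

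By Lemma \ref{reducevar}, I first work in $\mathbb{K}[x_1,\ldots,x_{i_s}]$ and $\mathbb{K}[x_1,\ldots,x_{i_s+r}]$; by iterating, it suffices to treat $r=1$. The analysis then splits according to whether $i_{s-1} < i_s-1$ (``case (a),'' where $T(m) = T(m')$ and only the first entry of $IT$ changes) or $i_{s-1} = i_s-1$ (``case (b),'' where $m'$ acquires a new jump at position $s-1$, so the set in Theorem \ref{theo.dual1} for $m'$ picks up an additional generator $M_1' = x_{s-1}\cdots x_{i_{s-1}}$ of the same size as $M_0 = x_s\cdots x_{i_s}$).

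The heart of the argument is a pigeonhole lemma: since any size $|M_j|$ subset of $\{x_1,\ldots,x_{i_{t_j}}\}$ is itself a Borel move of $M_j = x_{t_j}\cdots x_{i_{t_j}}$, if a minimal generator $P$ of the Borel ideal in Theorem \ref{theo.dual1} satisfies $|P \cap \{x_1,\ldots,x_{i_{t_j}}\}| \geq |M_j|$ then some Borel move of $M_j$ properly divides $P$, contradicting minimality. A short counting argument using this bound forces the ``tail'' of $P$ to fill out entirely. In case (a), every minimal generator of $I$ from $M_0$ contains all of $\{i_{s-1}+1,\ldots,i_s\}$ and every minimal generator of $J$ from $M_0'$ contains all of $\{i_{s-1}+1,\ldots,i_s+r\}$, yielding a bijection via a common ``low part'' $P_{\text{low}} \subseteq \{1,\ldots,i_{s-1}\}$. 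In case (b), the same lemma forces every minimal generator of $J$ from $M_0'$ to contain all of $\{i_s,\ldots,i_s+r\}$; this pairs with those minimal generators of $I$ from $M_0$ that contain $x_{i_s}$, while the minimal generators of $J$ from $M_1'$ correspond to those minimal generators of $I$ from $M_0$ that omit $x_{i_s}$.

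With the associated primes matched up, I then exploit the fact that $J$'s LP is invariant under permutations of the tail variables: symmetrizing an optimal solution produces one with common tail value $b$. Setting $B$ equal to the total tail mass and performing the analogous substitution in $I$ (whose ``tail'' is a smaller set with the same role), both LPs collapse to the identical canonical system—minimize $\sum_{j\leq i_{s-1}} y_j + B$ subject to $\sum_{j\in P_{\text{low}}} y_j + B \geq 1$ from the $M_0$/$M_0'$ pairing, the identical constraints from the deeper $M_j$, and (in case (b)) the constraints from $M_1'$. Since the two reduced LPs coincide, the optima agree, giving $\widehat{\alpha}(I) = \widehat{\alpha}(J)$. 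The main delicate step will be the careful bookkeeping in case (b), where $M_0$ in $I$ plays a dual role corresponding to both $M_0'$ and $M_1'$ in $J$.
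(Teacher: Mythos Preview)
Your approach is correct, and it rests on the same structural fact as the paper's proof: the associated primes of $J$ are obtained from those of $I$ by adjoining $x_{i_s+1}$ to each prime containing $x_{i_s}$ and leaving the others unchanged. Your pigeonhole lemma (any $|M_j|$-subset of $\{x_1,\ldots,x_{i_{t_j}}\}$ is a Borel move of $M_j$) is exactly the tool needed to verify this bijection, and your case split (a)/(b) is a correct way to organize the bookkeeping.

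The paper, however, packages this much more economically. Rather than building an explicit bijection and then symmetrizing over all tail variables to collapse both LPs to a common canonical system, the paper simply observes that the columns of $A'$ indexed by $x_{i_s}$ and $x_{i_s+1}$ are identical. This single observation immediately yields both inequalities: appending a zero sends a feasible $\mathbf{y}$ for $A$ to a feasible solution for $A'$, and summing the last two coordinates sends a feasible $\mathbf{y}'$ for $A'$ back to a feasible solution for $A$, with the same objective value in each direction. No case split is needed, and no symmetrization or ``common canonical LP'' is required.

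What your approach buys is transparency: you make explicit precisely which associated primes correspond, and your pigeonhole argument is in fact the missing justification for the paper's terse claim that the two columns coincide (the paper's sentence ``any element of ${\rm ass}(J)$ not in ${\rm ass}(I)$ includes both $x_{i_s}$ and $x_{i_s+1}$'' does not by itself establish that the columns are equal). What the paper's approach buys is brevity: once the column identity is granted, the proof is three lines. Your symmetrization step, while valid, is doing more work than necessary --- merging just the last two coordinates suffices.
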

 
 \begin{proof}
     Let $A$ be the matrix of associated primes of $I$ and let $\mathbf{y}=(y_1, \dots, y_{i_s})$ be an optimal solution to
     \begin{equation}\label{A1} \min\{\mathbf{1}^{T}\mathbf{x}\ |\ A\mathbf{x}\geq\mathbf{1}\}.
     \end{equation}
 First consider $r=1$ with $J={\rm sfBorel}(x_{i_1}\cdots x_{i_{s-1}}x_{{i_s}+1})$.  Let $A'$ be the matrix of associated primes of $J$.
   Any element of ${\rm ass}(J)$ not in ${\rm ass}(I)$  includes both $x_{{i_s}}$ and $x_{{i_s}+1}$ as generators, so the columns of $A'$ corresponding to $x_{{i_s}}$ and $x_{{i_s}+1}$ are identical. Let $\mathbf{y'} = (y_1',\dots,y'_{i_{s}+1})$ be an optimal solution to
     \begin{equation}\label{A2} \min\{\mathbf{1}^{T}\mathbf{x}\ |\ A'\mathbf{x}\geq\mathbf{1}\}.
     \end{equation}
and suppose for contradiction that $y_1' + \dots + y'_{i_{s}+1} = \widehat\alpha(J) < \widehat\alpha(I).$ But this means $(y_1',\dots,y'_{i_{s}}+y'_{i_{s}+1})$ is a feasible solution to (\ref{A1}), contradicting  $\mathbf{y}$ being optimal and showing $\widehat\alpha(J) \geq \widehat\alpha(I)$. Observing that $(y_1,\dots,y_{i_{s}},0)$ is a feasible solution to (\ref{A2}) gives $\widehat\alpha(I) \geq \widehat\alpha(J)$. This shows  $\widehat\alpha(I)=\widehat\alpha(J)$, and an inductive argument gives the result for $r \in \mathbb{N}$.
   \end{proof}


\end{document}